\numberwithin{equation}{section}
\theoremstyle{plain}
\newtheorem{thm}{Theorem}[section]
\begin{document}

\begin{frontmatter}

\title{The Minimum of the Entropy of a Two-Dimensional Distribution with Given Marginals}
\runtitle{Minimum Entropy}

\begin{aug}
\author{\fnms{Giorgio} \snm{Dall'Aglio}\ead[label=e1]{giorgio.dallaglio@fastwebnet.it}}
\and
\author{\fnms{Elisabetta} \snm{Bona}
}

\runauthor{G. Dall'Aglio and E. Bona}

\address{Department of Statistics\\
University La Sapienza\\
p.le Aldo Moro n. 5, 00185, Roma, Italy.\\
\printead{e1}\\
}
\end{aug}

\begin{abstract}
The paper search for the minimum of the entropy of a two-dimensional distribution in the Fr\'{e}chet class, the class of distributions with given marginals. The main result for discrete distributions is an algorithm for building the minimizing distribution, which is given by the maximum distribution function of the Fr\'{e}chet class after a suitable rearrangement of the rows and of the columns. For absolutely continuous distributions a minimum does not exists, and the infimum is equal to $-\infty$.
\end{abstract}

\begin{keyword}
\kwd{Distribution with given marginals}
\kwd{Entropy}
\end{keyword}

\end{frontmatter}

\section{The problem}

Given an unidimensional discrete random variable  $X$  the entropy $H(X)$ is defined by

\begin{equation*}
    H(X)=-\sum_r p_r \log p_r
\end{equation*}

\noindent where the the $p_r$'s are the probabilities with which $X$ takes its different values $x_r$. They form a countable set, with $p_r > 0$, $\sum_r p_r =1$. We will usually employ for sums and integrals this simplified notation. If  the value $p=0$ will occur, the product $p \log p$ will be taken by continuity to be zero. By its definition, entropy is non-negative; it may also be $+\infty$. The values of the r.v. are not taken in account. The basis of the logarithm is inessential in this paper, since we proceed by comparison; for calculation we take  \emph{e} as basis.

For discrete two-dimensional distributions the definition is:

\begin{equation*}
    H(X,Y)=-\sum_r \sum_s p_{r,s} \log p_{r,s}
\end{equation*}

As usual we pose

\begin{equation*}
    p_{r,\cdot}= P(X=x_r)=\sum_s p_{r,s}
\end{equation*}

\begin{equation*}
    p_{\cdot,s}= P(X=x_s)=\sum_r p_{r,s}
\end{equation*}

We adopt also the notation $H(P)$ were $P =\{p_{r,s}\}$  is the double array of the probabilities.
Entropy is often defined as a function of a random variable, although it depends only, and partially, on its distribution. By its definition, a permutation of the values of the random variables does not change the value of the entropy.

Entropy was defined by Claude E. Shannon \cite{r5} as a measure of uncertainty. This meaning is stressed by the fact that its minimum is zero (no uncertainty) when one of the $p_{r}$'s is equal to $1$, while its maximum, for a given number $n$ of values, is attained when all the probabilities are equal; then the entropy is equal to $\log n$.

In statistics entropy is considered as a measure of uncertainty, more specifically of dispersion. But its main utilization in the theory of information, where it plays a relevant role. In fact entropy changed of sign can indicate the amount of information given by the distribution.

Information theory considers also the mutual information $I(X,Y)$ defined by

\begin{equation*}
    I(X,Y)= \sum_r \sum_s p_{r,s} \log \frac{p_{r,s}}{p_{r,\cdot}p_{\cdot,s}}
\end{equation*}

Simple calculations bring to the relation

\begin{equation*}
	I(X,Y) = H(X) + H(Y) - H(X;Y)
\end{equation*}

\noindent which shows that the search for the minimum of $H(X,Y)$ furnishes also the maximum of $I(X,Y)$. The \emph{maximum mutual information criterion} is largely employed in applications.

For the basic notions about entropy and information we refer to \cite{r6}. Remark that when we write \emph{min}$H$ or \emph{max}$H$ without other indications we will mean always in the Fr\'{e}chet class, that is among the two-dimensional distributions with fixed marginals.

We recall here some notions about distributions with given marginals \cite{r1}. If $F(x)$ and $G(y)$ are two one-dimensional distribution functions, we consider the set $\Gamma(F;G)$ of two-dimensional distributions functions which have $F$ and $G$ as marginals; $\Gamma(F;G)=\{F(x,y): F(x,y)  \text{ is a distribution function,}\\ \lim_{y \rightarrow \infty} F(x,y) = F(x); \lim_{x \rightarrow \infty} F(x,y) = G(y)\}$.

This is called also \emph{Fr\'{e}chet class}, after a paper by Maurice Frech\'{e}t \cite{r2} which started the study of the subject, after a relevant but almost ignored paper by Wassilly Hoeffding  \cite{r3}.

Following inequalities hold.

\small{
\begin{equation}
	\max\{ F(x) + F(y)-1, 0\}=W(x,y) \leq F(x,y) \leq M(x,y)=\min\{F(x),G(y)\}
\label{1}
\end{equation}
}

The functions on the right and on the left side belong to the class, being so in it the maximum and the minimum one. $M(x,y)$ furnishes the maximum concentration, with a functional relation between $X$ and $Y$: the distribution is concentrated on the set $(F(X) = G(Y))$ , so that the relations $Y=G^{-1}F(X)$ and $X=F^{-1}G(Y)$ hold almost surely. Similar results hold for $W$.

In the discrete case, to which we are interested now, the probabilities $p_{r,s}$ are subject to the inequalities

\begin{equation}
	\max\{p_{r,\cdot} + p_{\cdot,s} -1, 0\} \leq p_{r,s} \leq \min\{p_{r,\cdot}, p_{\cdot,s}\}
\label{2}
\end{equation}

Either bound can be reached for every couple $(r,s)$, but not always at the same time for more than one couple. The distribution functions  $M$  and  $W$ correspond respectively to the \emph{cograduation table} and the \emph{contrograduation table}, introduced by T. Salvemini \cite{r4}. The cograduation table is constructed giving to $p_{1,1}$ its maximum value; then all the other items in the same row or in the same column, or both, are null, and the construction continues recursively. This construction is known also, specially in Operations Research, as  \emph{NW rule}.

\section{The dichotomic case}

We start with the simple case of a dichotomic table, i.e. both $X$ and $Y$ assume only two values. Then

\begin{equation*}
    H(P)= - p_{1,1} \log p_{1,1} - p_{1,2} \log p_{1,2} - p_{2,1} \log p_{2,1} - p_{2,2} \log p_{2,2}
\end{equation*}

Expressing the probabilities as functions of the only variable $p_{1,1}$, we have

\begin{eqnarray*}
  H(P) &=& H(p_{1,1}) = - p_{1,1} \log p_{1,1} - (p_{1,\cdot} - p_{1,1}) \log (p_{1,\cdot} - p_{1,1}) \\
       &-& (p_{\cdot,1} - p_{1,1}) \log (p_{\cdot,1} - p_{1,1}) - (1 - p_{1,\cdot} - p_{\cdot,1} + p_{1,1}) \log (1 - p_{1,\cdot} - p_{\cdot,1} + p_{1,1})
\end{eqnarray*}

The derivative of $H(p_{1,1})$ is

\begin{eqnarray*}
  H'(p_{1,1}) &=& - \log p_{1,1} - 1 + \log (p_{1,\cdot} - p_{1,1}) + 1 + \log (p_{\cdot,1} - p_{1,1}) + 1 - \log (1 - p_{1,\cdot} - p_{\cdot,1} + p_{1,1}) - 1 \\
  &=& \log[(p_{1,\cdot} - p_{1,1}) (p_{\cdot,1} - p_{1,1})]- \log[p_{1,1} (1 - p_{1,\cdot} - p_{\cdot,1} + p_{1,1})]
\end{eqnarray*}

\noindent so that

\begin{equation*}
  H'(p_{1,1}) > 0 \Leftrightarrow (p_{1,\cdot} - p_{1,1}) (p_{\cdot,1} - p_{1,1}) - p_{1,1} (1 - p_{1,\cdot} - p_{\cdot,1} + p_{1,1})>0 \Leftrightarrow p_{1,1} < p_{1,\cdot} p_{\cdot,1}
\end{equation*}

It can be easily seen that $p_{1,1}=p_{1,\cdot} p_{\cdot,1}$ satisfies inequalities (\ref{2}). So the maximum of $H(P)$ is given by $p_{1,1}=p_{1,\cdot} p_{\cdot,1}$, i.e. by the independence table.

We must search now for the minimum. Suppose

\begin{equation}
  p_{1,\cdot} \geq p_{2,\cdot}, \qquad p_{\cdot,1} \geq p_{\cdot,2},\qquad p_{1,\cdot} \leq p_{\cdot,1}
\label{3}
\end{equation}

Then (\ref{2}) becomes

\begin{equation*}
     p_{1,\cdot} + p_{\cdot,1} - 1 \leq p_{1,1} \leq p_{1,\cdot}
\end{equation*}

\noindent and the minimum obtains for one of the extreme values. Denote respectively $P'$ and $P''$ the tables obtained putting $p_{1,1}=p_{1,\cdot}$ and $p_{1,1}=p_{1,\cdot}+p_{\cdot,1}-1$. We may write

\begin{eqnarray*}
  H(P')-H(P'') &=& - p_{1,\cdot} \log p_{1,\cdot} - (p_{\cdot,1}-p_{1,\cdot}) \log (p_{\cdot,1}-p_{1,\cdot}) \\
  &+& (p_{1,\cdot}+p_{\cdot,1}-1) \log (p_{1,\cdot}+p_{\cdot,1}-1) + (1-p_{1,\cdot})\log(1-p_{1,\cdot})
\end{eqnarray*}

\noindent and

\begin{eqnarray*}
  \frac{d}{d p_{1,\cdot}}[H(P')-H(P'')] &=& - \log p_{1,\cdot} - 1 + \log(p_{\cdot,1}-p_{1,\cdot}) + 1 + \log(p_{1,\cdot}+p_{\cdot,1} - 1)\\
  &+& 1 - \log(1-p_{1,\cdot}) - 1 \\
  &=& \log [(p_{\cdot,1}-p_{1,\cdot})(p_{1,\cdot}+p_{\cdot,1}-1)] - \log[p_{1,\cdot}(1-p_{1,\cdot})] \leq 0
\end{eqnarray*}

\noindent since

\begin{equation*}
   (p_{1,\cdot}+p_{\cdot,1}-1)(p_{\cdot,1}-p_{1,\cdot})-(1-p_{1,\cdot})p_{1,\cdot} = p_{\cdot,1}^{2} - p_{\cdot,1} \leq 0
\end{equation*}

Then the maximum of  $H(P') - H(P'')$ is given by the minimum of $p_{1,\cdot}$ which, according to (\ref{3}), is $\frac{1}{2}$, and

\begin{eqnarray*}
  H(P')-H(P'') &\leq& - \frac{1}{2} \log \frac{1}{2} - \left(p_{\cdot,1} - \frac{1}{2}\right) \log \left(p_{\cdot,1} - \frac{1}{2}\right)+ \left(p_{\cdot,1} - \frac{1}{2}\right) \log \left(p_{\cdot,1} - \frac{1}{2} \right) \\
  &+& \frac{1}{2} \log \frac{1}{2}= 0
\end{eqnarray*}

We have thus proved.

\begin{thm}
For a $2\times2$ distribution with given marginals the maximum of the entropy is given by the independence table. The minimum obtains choosing the maximum $p_{u,\cdot}$ among the $p_{r,\cdot}$'s and the maximum $p_{\cdot,v}$ among the $p_{\cdot,s}$'s and putting $p_{u,v}=\min\{p_{u,\cdot} p_{\cdot,v} \}$.
\end{thm}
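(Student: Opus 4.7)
The plan is to reduce the problem to a one-dimensional optimization. Since the four probabilities are pinned down by the two marginals together with $p_{1,1}$, I would write $H$ as a function $H(p_{1,1})$ of this single free parameter (as in the calculation preceding the theorem), and study it on the feasible interval prescribed by \eqref{2}. Computing $H'(p_{1,1})$ yields the clean form $\log[(p_{1,\cdot}-p_{1,1})(p_{\cdot,1}-p_{1,1})] - \log[p_{1,1}(1-p_{1,\cdot}-p_{\cdot,1}+p_{1,1})]$, and setting this to zero gives the unique critical point $p_{1,1}=p_{1,\cdot}p_{\cdot,1}$. A quick check verifies that this value lies in the interval in \eqref{2}, so it is admissible. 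Because $H$ is concave in $p_{1,1}$ (the sign analysis of $H'$ shows it is positive before and negative after the critical point), this critical point is the maximum and corresponds to the independence table.

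For the minimum, concavity on the feasible interval forces the minimum to occur at one of the two endpoints. Here I would fix the ordering convention \eqref{3} without loss of generality (any other case reduces to this by permuting rows and/or columns, an operation that leaves $H$ invariant). The two endpoints are $p_{1,1}=p_{1,\cdot}$ (giving the table $P'$) and $p_{1,1}=p_{1,\cdot}+p_{\cdot,1}-1$ (giving $P''$); I need to show $H(P')\le H(P'')$.

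The key step, and the main obstacle, is the comparison $H(P')\le H(P'')$. I would treat $H(P')-H(P'')$ as a function of $p_{1,\cdot}$ with $p_{\cdot,1}$ held fixed, differentiate, and observe that the derivative has the same structural form as $H'(p_{1,1})$ above, allowing the same algebraic trick: the sign is governed by $(p_{\cdot,1}+p_{1,\cdot}-1)(p_{\cdot,1}-p_{1,\cdot}) - (1-p_{1,\cdot})p_{1,\cdot} = p_{\cdot,1}^{2}-p_{\cdot,1}\le 0$. Thus $H(P')-H(P'')$ is nonincreasing in $p_{1,\cdot}$, so it is maximized at the smallest admissible value of $p_{1,\cdot}$, namely $p_{1,\cdot}=1/2$ (the lower bound imposed by \eqref{3}). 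A direct substitution at $p_{1,\cdot}=1/2$ produces exact cancellations leaving $H(P')-H(P'')=0$, giving the desired inequality.

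It remains to read off the prescription of the theorem from $P'$. Under \eqref{3} we have $p_{1,\cdot}\le p_{\cdot,1}$, so $p_{1,1}=p_{1,\cdot}=\min\{p_{1,\cdot},p_{\cdot,1}\}$, which is exactly the rule ``put $p_{u,v}=\min\{p_{u,\cdot},p_{\cdot,v}\}$'' with $(u,v)$ indexing the two largest marginals. Since the reduction to \eqref{3} was by permutations and entropy is invariant under permutation of values, this identifies the minimizer in general, completing the proof.
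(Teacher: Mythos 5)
Your proposal is correct and follows essentially the same route as the paper: reduce to the single variable $p_{1,1}$, identify the interior critical point $p_{1,\cdot}p_{\cdot,1}$ as the maximum, reduce the minimum to the endpoint comparison $H(P')\le H(P'')$ under the ordering \eqref{3}, and settle that comparison by differentiating in $p_{1,\cdot}$ and evaluating at $p_{1,\cdot}=1/2$. No substantive differences to report.
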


The fact that the maximum of the entropy for discrete distributions is given by the independence distribution is well known, as the resulting inequality

\begin{equation}
  	 H(X,Y) \leq  H(X) + H(Y)
  \label{4}
\end{equation}

\noindent and the intuitive meaning is immediate if we talk of information, since clearly the independence give the minimum information among all two-way distributions, and the minimum information cannot be lesser than the information already contained in the marginals.

As for the minimum, for discrete distributions the following inequality hold

\begin{equation}
  	H(X,Y) \geq  H(X),H(Y)
\label{5}
\end{equation}

\noindent proved by

\begin{eqnarray*}
  H(X,Y) - H(X) &=& \sum_{r} p_{r, \cdot} \log p_{r, \cdot} - \sum_{r,s} p_{r,s} \log p_{r,s} \\
  &=& \sum_{r} \left[ p_{r, \cdot} \log p_{r, \cdot} - \sum_{s} p_{r,s} \log p_{r,s} \right] \\
  &=& \sum_{r} \left[ \log p_{r, \cdot}^{p_{r,.}} - \log \prod_{s} p_{r,s}^{p_{r,s}} \right] \\
  &=& \log \prod_{r,s} \left(\frac{p_{r, \cdot}}{p_{r,s}}\right)^{p_{r,s}} \geq 0
\end{eqnarray*}

\section{Discrete distributions}

We consider now the case where $X$ and $Y$ assume a finite number of values, say $\{x_1,x_2,\ldots,x_m\}$  and $\{y_1,y_2,\ldots,y_n\}$.
Some calculations along the same lines of the dichotomic case show that the maximum of the entropy is given by the independence distribution; a result well known, as already said.
The search for the minimum by derivation is more complicate. Because of the form of the derivative, the maximum for each $p_{r,s}$ is reached in one of the extreme of (\ref{2}), but the comparison is difficult because it depends on the other values. We present an algorithm which gives a minimizing table.

The hint is given by the $2\times2$ case; the algorithm consists of the repeated use of the same step.

\begin{algorithm}
\label{algorithm}
\caption{}
\begin{itemize}
  \item [] Choose the maximum (or one of the maximums) among the $p_{r, \cdot}$'s, say $p_{u, \cdot}$
  \item [] Choose the maximum (or one of the maximums) among the $p_{\cdot, s}$'s , say  $p_{\cdot, v}$
  \item [] Put $p^{*}_{u, v}=\min\{p_{u, \cdot},p_{\cdot, v}\} $
  \item [] Delete the row, or the column, or both, in which there is only one entry different from\\
  \quad zero and continue in the same way for subsequent tables.
\end{itemize}
\end{algorithm}

We remark that the table resulting after deleting is not a correlation table since the entries do not sum up to $1$. We could obtain a correlation table by dividing all the entries by $1-p_{u, \cdot}$ or $1-p_{\cdot, v}$; but since we proceed by comparison it is easy to see that our construction arrives to the same result.

If we rearrange the rows and the columns in the order in which we have taken the minimizing values $p^{*}_{r, s}$, $P^{*}$ is the cograduation table, built according the \emph{NW corner rule}. It corresponds to the maximum distribution function in the Fr\'{e}chet class, i.e the function $M$ in (\ref{1}).

\begin{thm}
Given two r.v.' $X$ and $Y$ with fixed discrete distributions, and $H(X)$, $H(Y)<+\infty$, the minimum of the entropy $H(X,Y) = H(P)$ is given by the correlation table $P^{*}$ built with the Algorithm $1$.
\end{thm}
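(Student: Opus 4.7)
The plan is to proceed by induction on the total dimension $m+n$. The base case $m=n=2$ is exactly Theorem 1, and the trivial cases $m=1$ or $n=1$ force $\Gamma$ to contain a single distribution, which coincides with its own cograduation. For the inductive step, permuting rows and columns allows us to assume $p_{1,\cdot}=\max_r p_{r,\cdot}$, $p_{\cdot,1}=\max_s p_{\cdot,s}$, and (transposing the table if necessary) $p_{1,\cdot}\le p_{\cdot,1}$, so that Algorithm 1 produces $p^{*}_{1,1}=\min(p_{1,\cdot},p_{\cdot,1})=p_{1,\cdot}$ at its first step.

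The reduction hinges on the following key lemma I would prove: some minimiser $\tilde P$ of $H$ on $\Gamma$ satisfies $\tilde p_{1,1}=p_{1,\cdot}$. Granted this, row~$1$ of $\tilde P$ is concentrated at column~$1$, and upon deletion we obtain an $(m-1)\times n$ array of total mass $1-p_{1,\cdot}$, with row marginals $(p_{2,\cdot},\ldots,p_{m,\cdot})$ and column marginals $(p_{\cdot,1}-p_{1,\cdot},p_{\cdot,2},\ldots,p_{\cdot,n})$. Rescaling by $(1-p_{1,\cdot})^{-1}$ normalises this into a probability table $\bar P$, and the entropy identity
\[
H(\tilde P)=-p_{1,\cdot}\log p_{1,\cdot}-(1-p_{1,\cdot})\log(1-p_{1,\cdot})+(1-p_{1,\cdot})\,H(\bar P)
\]
reduces the problem to the smaller one. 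By the induction hypothesis $H(\bar P)$ is minimised by the cograduation of the reduced marginals; un-rescaling and re-attaching row~$1$ reconstructs $P^{*}$, closing the induction.

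The main obstacle is the key lemma itself. My plan is to show that any $P\in\Gamma$ with $p_{1,1}<p_{1,\cdot}$ can be modified, without increasing $H$, so as to increase $p_{1,1}$. Since $p_{1,\cdot}>p_{1,1}$ and $p_{\cdot,1}\ge p_{1,\cdot}>p_{1,1}$, there exist $s\ge 2$ with $p_{1,s}>0$ and $r\ge 2$ with $p_{r,1}>0$. The natural move is a $2\times 2$ rearrangement of the sub-block on $\{1,r\}\times\{1,s\}$ driven by Theorem 1. I anticipate the main difficulty to be that the sub-block's local sub-marginals (\emph{viz.}~$p_{1,1}+p_{1,s}$ vs $p_{r,1}+p_{r,s}$, and $p_{1,1}+p_{r,1}$ vs $p_{1,s}+p_{r,s}$) need not inherit the global ordering $p_{1,\cdot}\ge p_{r,\cdot}$, $p_{\cdot,1}\ge p_{\cdot,s}$, so that a naive application of Theorem 1 could drive $p_{1,1}$ \emph{down} rather than up. I would circumvent this either by choosing $(r,s)$ carefully at each step so the sub-block inherits a favourable local ordering and iterating until $p_{1,1}$ reaches $p_{1,\cdot}$, or by a more global construction---setting $\hat p_{1,1}=p_{1,\cdot}$, $\hat p_{1,s}=0$ for $s\ge 2$, and redistributing $p_{r,1}$ proportionally across cells $(r,s)$ with $s\ge 2$---and then verifying $H(\hat P)\le H(P)$ by a convexity argument exploiting the global maximality of $p_{1,\cdot}$ and $p_{\cdot,1}$ among their marginals.
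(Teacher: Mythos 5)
Your inductive skeleton is essentially the paper's: concentrate the first row into the cell $(1,1)$, split off its entropy contribution, and recurse on the reduced table (your rescaled identity for $H(\tilde P)$ is correct, and in fact cleaner than the paper's unnormalised version). But the proposal is not a proof, because the one statement that carries all the mathematical content --- your ``key lemma'' that some minimiser $\tilde P$ satisfies $\tilde p_{1,1}=p_{1,\cdot}$ --- is left unproven. You offer two strategies, and you yourself observe that the first one breaks: by the Section 2 computation, a rearrangement on the block $\{1,r\}\times\{1,s\}$ that pushes $p_{1,1}$ upward decreases the block's entropy only when $p_{1,1}$ already exceeds the block's own independence value, and the sub-block marginals need not inherit the global ordering, so a single such move can increase $H$. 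The second strategy (set $\hat p_{1,1}=p_{1,\cdot}$ and redistribute proportionally) is asserted to follow ``by a convexity argument'' with no argument given; note that $H$ is concave, not convex, in the entries, and the proportional redistribution changes many cells at once, so it is unclear which inequality you intend to invoke. Until one of these is carried out, the induction has nothing to stand on. For what it is worth, the paper's own proof is equally thin at exactly this point: it claims $H(\overline{P}^{*})\le H(\overline{P})$ ``by the recursive construction'' even though $\overline{P}^{*}$ and $\overline{P}$ have different column marginals, so the inductive hypothesis does not apply to that pair. You have correctly located the genuine difficulty, but locating it is not resolving it. One workable route: at any minimiser, strict concavity of $H$ along the direction $(+\varepsilon,-\varepsilon,-\varepsilon,+\varepsilon)$ on any $2\times2$ minor forces at least one of the four entries to vanish, and one can then argue combinatorially from that support structure toward the north-west-corner form.

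A second, smaller omission: the theorem covers countably infinite discrete distributions, for which induction on $m+n$ does not terminate and a minimiser is not guaranteed by compactness. The paper handles this by restricting to the sets $D_n$, using $H(X,Y)\le H(X)+H(Y)<+\infty$ to show the tail contribution vanishes, and passing to the limit; your proposal would need an analogous closing step.
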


\begin{proof}
We start with the finite case, and we proceed by induction on the number of rows or columns; the theorem is trivially true when the number of rows, or of columns, is equal to $1$.

Let us apply the algorithm, arriving to the correlation table $P^{*}$. For sake of simplicity suppose that the first step brings to $p^{*}_{1, 1}=p_{1,\cdot}$, so that $p_{1,s}=0$ for $s>1$.
In the table $P^{*}$ cancel the first column and call $\overline{P}^{*}$  the remaining table. Also for a generic $P$ call $\overline{P}$  the table obtained canceling the first column. We have

\begin{eqnarray}
  \nonumber H(P^{*}) &=& -p_{1,\cdot} \log p_{1,\cdot} + H(\overline{P}^{*}) \\
  H(P) &=& - \sum_{s} p_{1,s} \log p_{1,s} + H(\overline{P})
  \label{6}
\end{eqnarray}

Now

\begin{equation*}
    p_{1, \cdot} \log p_{1, \cdot} - \sum_{s} p_{1,s} \log p_{1,s} = \log \frac{p_{1, \cdot}^{p_{1, \cdot}}}{\prod_{s} p_{1, s}^{p_{1, s}}} = \log \prod_{s} \left( \frac{p_{1, \cdot}}{p_{1,s}} \right)^{p_{1,s}} \geq 0
\end{equation*}

\noindent so that

\begin{equation}
    -p_{1, \cdot} \log p_{1, \cdot} \leq \sum_{s} -p_{1,s} \log p_{1,s}
\label{7}
\end{equation}

Moreover, since $\overline{P}^{*}$  is built according to the Algorithm $1$, by the recursive construction $H(\overline{P}^{*}) \leq H(\overline{P})$. This, along with (\ref{6}) and (\ref{7}), gives $H(P^{*}) \leq H(P)$, proving the theorem for the finite case.

In the denumerable case, consider the set $D_n=\{(r,s):r,s \geq n\}$ and its complement $D_{n}^{c}$  and write

\begin{eqnarray}
\nonumber  H(X,Y) &=& -\sum_{r,s} p_{r,s} \log p_{r,s} \\
    &=& -\sum_{(r,s) \in D_n} p_{r,s} \log p_{r,s} - \sum_{(r,s) \in D_{n}^{c}} p_{r,s} \log p_{r,s}
\label{8}
\end{eqnarray}

From the hypothesis $H(X),H(Y)\leq +\infty$ and (\ref{4}) it follows that $H(X,Y)\leq +\infty$, and this implies that the last sum in (\ref{8}) tends to zero when $n\rightarrow\infty$. Therefore applying the Algorithm $1$ to the sum restricted to $D_n$  and going to the limit proves the theorem.

\end{proof}

\section{Continuous distributions}

Entropy for continuous distributions is defined by

\begin{eqnarray}
\nonumber  H(X) &=& -\int_R f(x) \log f(x) dx\\
           H(X,Y) &=& -\int_{R_2} f(x,y) \log f(x,y) dx dy
\label{9}
\end{eqnarray}

The second definition in (\ref{9}) contains the two-dimensional density function $f(x,y)$, whose existence is not assured by the absolute continuity of the marginals, contrary to what happens for discrete distributions.
	This remark must be kept in mind if one proceeds by limit, as we will do: the limit of a sequence of absolutely continuous distributions is not necessarily absolutely continuous, even maintaining the same marginals; it can be, for instance, concentrated on a line, so that it has not a two-dimensional density function.

Entropy for continuous distributions presents features very different from the discrete case. The differences arise from the fact that the density function may assume values greater than $1$, so that the entropy may be negative.

We will proceed by discretization and limit. Given a two-dimensional r.v. $(X,Y)$ with density function $f(x,y)$, which we will assume continuous, define

\begin{eqnarray}
\nonumber  X_n &=& \frac{r}{n} \qquad \text{if} \qquad \frac{r}{n} \leq X < \frac{r+1}{n} \qquad r=0,\pm 1,\pm 2, \ldots\\
           Y_n &=& \frac{s}{n} \qquad \text{if} \qquad  \frac{s}{n} \leq X < \frac{s+1}{n} \qquad s=0,\pm 1,\pm 2, \ldots
\label{10}
\end{eqnarray}

Then $X_n$ (resp. $Y_n$) converges almost surely to $X$ (resp $Y$) and $(X_n,Y_n)$ converges almost surely to $(X,Y)$. For the probabilities, after defining

\begin{equation*}
          \Delta_{n,r,s} = \left\{ \frac{r}{n} \leq x < \frac{r+1}{n}, \frac{s}{n} \leq y < \frac{s+1}{n}\right\}
\end{equation*}

\noindent we have

%

\begin{eqnarray}
\nonumber p_{n,r,s} &=& P\left(X_n =\frac{r}{n},Y_n = \frac{s+1}{n}\right) = \int_{\Delta_{n,r,s}} f(x,y) dx dy\\
 p_{n,r,\cdot}&=&\int_{\frac{r}{n}}^{\frac{r+1}{n}} f(x) dx
\label{11}
\end{eqnarray}

This makes clear the aforesaid difference between the discrete and the continuous case.
The consequences of these remarks are illustrated by the following Theorem (see Theorem 1,3,1 of \cite{r6}). As we have said, now the entropy may be negative, so we require its boundness.

\begin{thm}
Given a r.v. $(X.Y)$ with continuous density function $f(x,y)$ and such that $\mid H(X,Y) \mid < +\infty $, consider the sequence of r.v.'s $\{X_n,Y_n\}$  defined by (\ref{10})
Then $\lim[H(X_n,Y_n) - 2 \log n]=H(X,Y)$
\end{thm}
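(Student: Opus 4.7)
The plan is to use the mean-value theorem for integrals cell by cell and recognize the resulting expression as a Riemann sum for $-\int f\log f$.

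By continuity of $f$, for each cell $\Delta_{n,r,s}$ (of Lebesgue measure $1/n^2$) there is a point $(\xi_{n,r,s},\eta_{n,r,s}) \in \Delta_{n,r,s}$ with
\[
p_{n,r,s} \;=\; \frac{1}{n^2}\, f(\xi_{n,r,s},\eta_{n,r,s}).
\]
Inserting this into $H(X_n,Y_n) = -\sum_{r,s} p_{n,r,s}\log p_{n,r,s}$, expanding $\log(f/n^2) = \log f - 2\log n$, and using $\sum_{r,s} p_{n,r,s} = 1$, I obtain
\[
H(X_n,Y_n) - 2\log n \;=\; \sum_{r,s} \frac{1}{n^2}\, \varphi(\xi_{n,r,s},\eta_{n,r,s}),
\]
where $\varphi(x,y) = -f(x,y)\log f(x,y)$ (with the convention $0\log 0 = 0$). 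The right-hand side is a Riemann sum over a regular grid of mesh size $1/n$ with tags $(\xi_{n,r,s},\eta_{n,r,s})$, for the integral $\int_{R_2} \varphi = H(X,Y)$.

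Convergence of this Riemann sum to the integral then follows by a standard two-part argument. Fix $\epsilon > 0$ and choose a large square $Q_N = [-N,N]^2$ so that $\int_{R_2 \setminus Q_N} |\varphi| < \epsilon$; this is possible because $\varphi$ is absolutely integrable by the hypothesis $|H(X,Y)| < \infty$. On $Q_N$, $f$ (and hence $\varphi$) is uniformly continuous, and the Riemann sum over the cells contained in $Q_N$ converges to $\int_{Q_N} \varphi$ as $n\to\infty$ in the usual way. For the cells meeting the complement of $Q_N$, each contributes approximately $\int_{\Delta_{n,r,s}} \varphi$ up to an error controlled by the oscillation of $\varphi$ on cells of diameter $\sqrt{2}/n$, so the tail Riemann sums are bounded by $\epsilon + o(1)$ uniformly in $n$.

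The main obstacle is precisely this tail bound: the sum ranges over the full index lattice and $\varphi$ may take both signs, so one cannot simply invoke monotone convergence. The critical input is the assumption $|H(X,Y)| < \infty$, which supplies absolute integrability of $\varphi$ and lets us truncate the plane with an error that is uniform in $n$. Letting $n\to\infty$ first and then $N\to\infty$ yields $\lim[H(X_n,Y_n) - 2\log n] = H(X,Y)$.
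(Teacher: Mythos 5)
Your reduction of $H(X_n,Y_n)-2\log n$ to the Riemann sum $\sum_{r,s} n^{-2}\varphi(\xi_{n,r,s},\eta_{n,r,s})$ via the mean value theorem is correct, and your treatment of the bulk is sound: on a compact square $Q_N$ the function $\varphi=-f\log f$ is uniformly continuous, so the Riemann sum over the cells contained in $Q_N$ converges to $\int_{Q_N}\varphi$. Up to that point your argument runs parallel to the paper's, which likewise establishes the local convergence $n^2\int_{\Delta_{n,r,s}}f\to f(x,y)$ and then truncates; the difference is that the paper truncates on the level sets $D_m=\{1/m\le f\le m\}$, where the discretized integrand is uniformly bounded between $-m\log m$ and $m\log m$ and dominated convergence applies, whereas you truncate spatially.

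The genuine gap is the tail estimate. You assert that the cells meeting $R_2\setminus Q_N$ contribute at most $\epsilon+o(1)$ uniformly in $n$, with the error ``controlled by the oscillation of $\varphi$ on cells of diameter $\sqrt2/n$.'' There are infinitely many such cells, and $\sum_{\mathrm{tail}} n^{-2}\,\mathrm{osc}_{\Delta_{n,r,s}}(\varphi)$ has no reason to be small or even finite: even if $\varphi$ were uniformly continuous on all of $R_2$ with modulus $\omega$, the bound would read (number of cells)$\,\times\, n^{-2}\omega(\sqrt2/n)$ with infinitely many cells. The deeper obstruction is one-sided: since $t\mapsto -t\log t$ is concave, Jensen's inequality gives $-p_{n,r,s}\log(n^2p_{n,r,s})\ge\int_{\Delta_{n,r,s}}\varphi$ for every cell, so $\int_{\mathrm{tail}}|\varphi|<\epsilon$ bounds the tail Riemann sum from \emph{below} by $-\epsilon$ but gives no upper bound. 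Indeed, a cell in which $f$ equals a moderate value on a fraction $\theta$ of the cell and is near zero elsewhere contributes about $\log(1/\theta)/\log 2$ times $\int_{\Delta}|\varphi|$ to the discrete sum, and this ratio is unbounded as $\theta\to 0$; spreading such cells through the tail keeps $\int_{\mathrm{tail}}|\varphi|$ small while making the discrete tail entropy large. So the hypothesis $|H(X,Y)|<\infty$ alone does not supply the uniform-in-$n$ tail bound, and the step ``the tail Riemann sums are bounded by $\epsilon+o(1)$ uniformly in $n$'' is asserted rather than proved. Some additional mechanism is needed here --- the paper's device is to work on $D_m$, where $1/m\le n^2\int_{\Delta}f\le m$ forces two-sided bounds, and then let $m\to\infty$ using (\ref{14}) and a subsequence argument.
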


\begin{proof}
We can write

\begin{eqnarray}
\nonumber H(X_n,Y_n) - 2 \log n &=&  -\sum_{r,s} p_{n,r,s} \log p_{n,r,s} - 2 \log n \\
          &=& -\sum_{r,s} p_{n,r,s} \log (n^2 p_{n,r,s})\\
\nonumber &=& - \sum_{r,s} \int_{\Delta_{n,r,s}} f(x,y) \log \left( n^2 \int_{\Delta_{n,r,s}} f(u,v) du dv \right) dx dy
\label{12}
\end{eqnarray}

Therefore we shall prove that

\begin{eqnarray}
\nonumber  && \lim_{n\rightarrow +\infty}  \left( -\sum_{r,s} \int_{\Delta_{n,r,s}} f(x,y) \log \left(n^2 \int_{\Delta_{n,r,s}} f(u,v) du dv \right) dx dy \right) \\
&& = -\int f(x,y) \log f(x,y) dx dy
\label{13}
\end{eqnarray}

Since $f(x,y)$ is continuous, for $(x,y) \in \Delta_{n,r,s}$  we have

\begin{equation*}
            \lim_{n\rightarrow +\infty} \left( n^2 \int_{\Delta_{n,r,s}} f(u,v) du dv \right) = f(x,y)
\end{equation*}

\noindent and

\begin{equation*}
            \lim_{n\rightarrow +\infty} f(x,y) \log \left( n^2 \int_{\Delta_{n,r,s}} f(u,v) du dv \right) = f(x,y) \log f(x,y)
\end{equation*}

This is not sufficient for the convergence of the integral in (\ref{13}). For proving it, we choose a positive integer m, and consider the set

\begin{equation*}
            D_{m}=\left\{(x,y): \frac{1}{m} \leq f(x,y) \leq m \right\}
\end{equation*}

Since $H(X,Y)$ is finite, we have

\begin{equation}
            \lim_{m \rightarrow \infty} \int_{D_m} -f(x,y) \log f(x,y) dx dy= H(X,Y)
\label{14}
\end{equation}

For $(u,v)$ belonging to $D_m$ it is

\begin{equation*}
            \frac{1}{m} \leq n^2 \int_{\Delta_{n,r,s}} f(u,v) dudv \leq m
\end{equation*}

Then the integrand in (\ref{13}) is bounded, and, by the dominated convergence theorem, for $m$ fixed

\begin{equation*}
         \lim_{n \rightarrow \infty} \left(-\sum_{r,s} \int_{\Delta_{n,r,s} \bigcap D_m} f(x,y) \log \left( n^2 \int_{\Delta_{n,r,s}} f(u,v) dudv \right) dx dy \right) = - \int_{D_m} f(x,y) \log f(x,y) dxdy
\end{equation*}

This means that $\forall \varepsilon>0,m,\exists n_m :$

\small{
\begin{equation*}
      \left|-\sum_{r,s} \int_{\Delta_{n,r,s} \bigcap D_m} f(x,y) \log \left( n^2 \int_{\Delta_{n,r,s}} f(u,v) dudv\right) dx dy - \int_{D_m} f(x,y) \log f(x,y) dx dy \right| \leq \varepsilon
\end{equation*}
}

\noindent i.e. there is a subsequence $\{n_m\}$ for which the limit (\ref{13}) holds. But this can be said also if we start from a subsequence, and this prove the theorem.

\end{proof}

We proceed now to the search for the minimum of $H(X,Y)$ when the marginal distributions $F(x)$ and $G(y)$ are given. We suppose that $F$ and $G$ are strictly increasing: if there are intervals $(x_r,x_r+d_r)$ in which the density is zero, they are a countable set and are irrelevant for the entropy; they can be easily removed passing from F to a new distribution function defined by recurrence:

\begin{equation*}
    F_0(x)=F(x)
\end{equation*}

\begin{equation*}
    F_{i+1}(x)=\left\{
                 \begin{array}{ll}
                   F_i(x) & \hbox{if} \quad x \leq x_i; \\
                   F_i(x+d_r) & \hbox{if} \quad  x>x_i.
                 \end{array}
               \right.
\end{equation*}

We proceed  by discretization. According to Theorem $3.1$, we have

\begin{equation*}
    \min H(X_n, Y_n) = H(X_n^{*}, Y_n^{*})
\end{equation*}

\noindent where $(X_n^{*}, Y_n^{*})$  is built with the Algorithm 1. And because of Theorem $4.1$

\begin{equation}
    \min H(X_n, Y_n) = \lim_{n\rightarrow\infty} [ H(X_n^{*}, Y_n^{*}) - 2 \log n]
\label{15}
\end{equation}

But the limit in (\ref{15}) cannot be obtained through Theorem 3. As remarked for the Algorithm 1, the r.v. $(X_n^{*}, Y_n^{*})$ has a distribution function which is maximum in the Fr\'{e}chet class, and the same is for the limit $(X^{*}, Y^{*})$ when $n\rightarrow\infty$ . Therefore, as said then, $(X^{*}, Y^{*})$  is concentrated in the curve $\Gamma= (F(X) = G(Y))$, where $F$ and $G$ are the marginal distribution functions, and it has not a two-dimensional density, so that Theorem $4.1$ cannot be applied.
Since by hypothesis $G$ is strictly increasing and has continuous derivative, the inverse $G^{-1}$ exists, is strictly increasing, and has continuous derivative; the curve $\Gamma$ can be written as

\begin{equation*}
    Y=G^{-1}(F(X))=t(X)
\end{equation*}

\noindent where the function $t$ has continuous derivative.

The search for the minimum brings to a somewhat surprising result.

\begin{thm}
Let $F(x)$ and $G(y)$ be two one-dimensional distribution functions, with continuous derivatives, and such that  $H(X)$, $H(Y)<+\infty$. Then the infimum of $H(X,Y)$ in the Fr\'{e}chet class of the distribution functions with marginals $F$ and $G$ is $-\infty$. More precisely there is a positive finite $c$ such that

\begin{equation}
    \min H(X, Y) + \log n \rightarrow c
\label{16}
\end{equation}

A minimum in the class does not exists, i.e. there is no distribution function with two-dimensional density which attains the minimum.
\end{thm}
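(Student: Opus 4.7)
The plan is to evaluate $\min H(X_n, Y_n)$ exactly via Theorem 3.1 and show its leading behaviour is $\log n + O(1)$, i.e.\ the discrete minimum grows at half the rate that a genuine 2D density would require under Theorem 4.1. By Theorem 3.1, $\min H(X_n, Y_n) = H(X^{*}_n, Y^{*}_n)$, where the cograduation $(X^{*}_n, Y^{*}_n)$ is supported on the grid cells pierced by the curve $\Gamma: y = t(x)$ with $t = G^{-1}\circ F$. I aim to show $H(X^{*}_n, Y^{*}_n) - \log n \to c$ for a finite $c$; combined with (15), this gives $\min H(X, Y) + \log n \to c$, hence $\min H(X, Y) = -\infty$.

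First sandwich $H(X^{*}_n, Y^{*}_n)$ between two quantities of the form $\log n + O(1)$. Since $t$ is $C^{1}$ and strictly monotone, the number of $n\times n$ grid cells through which $\Gamma$ passes is bounded by the number of horizontal plus vertical grid crossings, which is $O(n)$; hence $H(X^{*}_n, Y^{*}_n) \le \log(\#\text{cells}) = \log n + O(1)$. A matching lower bound comes from $H(X^{*}_n, Y^{*}_n) \ge H(X^{*}_n)$ together with the one-dimensional case of Theorem 4.1, which gives $H(X^{*}_n) = \log n + H(X) + o(1)$. In particular $\min H(X_n, Y_n) - 2\log n = H(X^{*}_n, Y^{*}_n) - 2\log n \to -\infty$, which already establishes the first half of the theorem.

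To identify the limit $c$, use the chain rule $H(X^{*}_n, Y^{*}_n) = H(X^{*}_n) + H(Y^{*}_n \mid X^{*}_n)$. Conditionally on $\{X^{*}_n = r/n\}$, the variable $X$ is essentially uniform on $I_r = [r/n, (r+1)/n)$, and linearising $t(x) \approx t(r/n) + \beta_r (x - r/n)$ with $\beta_r = t'(r/n)$ reduces the conditional law of $Y^{*}_n$ to that of $\lfloor \alpha_r + \beta_r U\rfloor$, where $U$ is uniform on $[0,1)$ and $\alpha_r = \{n\, t(r/n)\}$. A direct computation yields a closed-form entropy $\Phi(\alpha_r, \beta_r)$. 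Averaging against $p_{n,r,\cdot} \approx f(r/n)/n$ and invoking Weyl equidistribution of $\{n\, t(r/n)\}$ on subintervals where $t'$ is essentially constant gives
$$H(Y^{*}_n \mid X^{*}_n) \;\longrightarrow\; \int f(x)\,\psi(t'(x))\,dx$$
for an explicit non-negative $\psi$; thus $H(X^{*}_n, Y^{*}_n) - \log n \to c = H(X) + \int f\,\psi(t')\,dx$, finite.

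For non-attainment, use the standard identity $H(\tilde f) = H(X) + H(Y) - I(\tilde X, \tilde Y)$ with $I = D(\tilde f \,\|\, f\otimes g)$: any absolutely continuous joint density $\tilde f$ with continuous marginals has $D(\tilde f \,\|\, f\otimes g) < +\infty$, so $H(\tilde f) > -\infty$, and the infimum $-\infty$ cannot be attained inside the class of 2D densities; it is only reached in the limit by sequences whose support collapses onto $\Gamma$. The main technical obstacle is the second step above: the fractional parts $\{n\, t(r/n)\}$ fluctuate with $r$ and must be averaged out by an equidistribution argument, while degenerate cases such as $t(x) = x$ (where $\alpha_r \equiv 0$) fall outside Weyl's hypothesis and require a separate direct computation, which still produces a finite limit and so preserves the rate in (16).
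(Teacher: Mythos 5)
Your overall skeleton (discretize, invoke Theorem 3.1 to reduce to the cograduation table supported on the cells pierced by $\Gamma$, and show its entropy is $\log n + O(1)$ rather than $2\log n + O(1)$) matches the paper's, and your elementary sandwich for the rate is sound in spirit: the lower bound $H(X_n^*,Y_n^*)\geq H(X_n^*)=\log n+H(X)+o(1)$ is exactly inequality (\ref{5}) plus the one-dimensional version of Theorem 4.1. (Note, though, that your upper bound $H\leq\log(\#\text{cells})$ only makes sense for compactly supported marginals; for unbounded support the curve pierces infinitely many cells and you must fall back on your step identifying $c$.) Where you genuinely diverge is in computing the $O(1)$ term: you use the chain rule $H(X_n^*,Y_n^*)=H(X_n^*)+H(Y_n^*\mid X_n^*)$ and try to evaluate the conditional entropy by linearising $t$ and averaging the cell-offset fractional parts $\{n\,t(r/n)\}$ by Weyl equidistribution. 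The paper instead observes that the NW-corner table decomposes into disjoint ``strings'' in which each nonzero entry equals a full row or column marginal mass, so that $-\sum p^*\log(np^*)$ becomes a pair of one-dimensional Riemann sums for $-\int_{A_n'} g\log g$ and $-\int_{B_n'} f\log f$, finite by hypothesis. Your route is conceptually clean but leaves the hardest point unresolved: the equidistribution of $\{n\,t(r/n)\}$ is not a standard Weyl statement (for each fixed $n$ it is a finite sequence with slowly varying increments $\approx t'(r/n)$, and it degenerates whenever $t'$ is locally constant and rational), and the existence of the limit $c$ in (\ref{16}) -- as opposed to mere boundedness of $\min H(X,Y)+\log n$ -- hinges entirely on it. You flag this but do not close it, so as written you have proved the infimum is $-\infty$ and the rate is $\log n+O(1)$, but not the convergence claimed in (\ref{16}).

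The non-attainment step contains an outright error. You assert that any absolutely continuous joint density $\tilde f$ with the given marginals satisfies $D(\tilde f\,\|\,f\otimes g)<+\infty$, hence $H(\tilde f)>-\infty$. This is false: mutual information of a jointly absolutely continuous pair can be $+\infty$ even when both marginal entropies are finite (concentrate geometrically decreasing masses on a sequence of shrinking squares along the diagonal, with the density on the $k$-th square blowing up fast enough that $\int \tilde f\log\tilde f$ diverges while the marginals stay bounded). So finiteness of $H(X)+H(Y)$ does not by itself force $H(\tilde f)>-\infty$, and the identity $H=H(X)+H(Y)-I$ cannot rule out attainment of the value $-\infty$ by a genuine two-dimensional density without a further argument. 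This part needs to be replaced or substantially strengthened.
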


\begin{proof}
The argument will be similar to the one in Theorem $4.1$. For that we introduce the function

\begin{equation*}
    f_{n}^{*}(x,y) = n^2 p^{*}_{n,r,s} \quad \text{if} \quad (x,y) \in \Delta_{n,r,s}
\end{equation*}

\noindent so that

\begin{eqnarray}
           H(X_{n}^{*},Y_{n}^{*}) - 2 \log n &=& \sum_{r,s} p^{*}_{n,r,s} \log n^2 p^{*}_{n,r,s}\\
\nonumber  &=& \sum_{r,s} \int_{\Delta_{n,r,s}} f^{*}_{n}(x,y) \log \left[n^2 \int_{\Delta_{n,r,s}} f^{*}_{n}(u,v) du dv \right]dx dy
\label{17}
\end{eqnarray}

We prove first that out of the curve $\Gamma$ the integral tend to $0$:

\begin{equation}
    \lim \sum_{r,s} \int_{\Delta_{n,r,s} \bigcap \Gamma^c} f^{*}_{n}(x,y) \log \left[n^2 \int_{\Delta_{n,r,s}\bigcap \Gamma^c} f^{*}_{n}(u,v) du dv \right]dxdy=0
\label{18}
\end{equation}

For $(x,y) \in \Gamma^c$, $f^{*}_{n}(x,y)\rightarrow0$  and $f^{*}_{n}(x,y) \log \left(n^2 \int_{\Delta_{n,r,s}\bigcap \Gamma^c} f^{*}_{n}(u,v) du dv \right) \approx f(x,y)\rightarrow0$

The argument trough the sets $D_m$ as in Theorem $4.1$ proves (\ref{18}).

We  go on with the proof, studying the structure of the distribution of the minimizing r.v. $(X_{n}^{*},Y_{n}^{*})$ .
We may suppose that the rows and the columns are in the order in which they are chosen in the construction. This implies that cases with probability different from zero in the same row or in the same columns are adjacent. The first choice is now $p^{*}_{n,1,1}$; suppose that it is equal to $p_{\cdot,1}$; there may be other probabilities  different from zero in the same column, so that $p^{*}_{n,1,s}=p_{\cdot,s}$  for $s=1,2,\ldots,k$ $(k\geq1)$; it is  $k<\infty$ since $p^{*}_{n,1,s}=p_{\cdot,s}$  for all $s$ would imply $p^{*}_{n,r,s}=0$  for $r\neq1$  and every $s$, not consistent with the margins.	After that another string will start. It may be vertical, as the first one, or horizontal, i.e. with  $p^{*}_{n,r,s}=p_{r,\cdot}$. We have therefore a set $A_n$ of indexes $r$ for which the strings are vertical, and for each $s$ two values $s'_{n,r}$  and $s''_{n,r}$  such that $p^{*}_{n,r,s}=p_{\cdot,s}$ , and a set $B_n$ of indexes $s$ for which the strings are horizontal, and for each $s$ two values $r'_{n,s}$  and $r''_{n,s}$   such that $p^{*}_{n,r,s}=p_{r,\cdot}$  . One of the sets  $A_n$  and $B_n$   may be void.

Suppose first that

\begin{equation}
    \sum_{s'_{n,r}\leq s\leq s''_{n,r}} p_{\cdot,s}=p_{r,\cdot} \qquad \text{and} \qquad \sum_{r'_{n,s}\leq r\leq r''_{n,s}} p_{r,\cdot}=p_{\cdot,s}.
\label{19}
\end{equation}

This implies that the strings are disjoint, and
	
\begin{eqnarray}
\nonumber H(X_{n}^{*},Y_{n}^{*})-\log n &=& -\sum_{r,s} p^{*}_{n,r,s} \log n p^{*}_{n,r,s}\\
\nonumber &=& -\sum_{r \in A_{n}} \sum_{s'_{n,r}\leq s\leq s''_{n,r}} p_{\cdot,s} \log (n p_{\cdot,s}) - \sum_{s \in B_{n}} \sum_{r'_{n,s}\leq r\leq r''_{n,s}} p_{r,\cdot}\log (n p_{r,\cdot})\\
\nonumber &=& -\sum_{r \in A_{n}} \sum_{s'_{n,r}\leq s\leq s''_{n,r}} \int_{\frac{s}{n}}^{\frac{s+1}{n}} g(y) \log \left(n \int_{\frac{s}{n}}^{\frac{s+1}{n}} g(v) dv \right) dy  \\
\nonumber &-& \sum_{s \in B_{n}} \sum_{r'_{n,s}\leq r\leq r''_{n,s}} \int_{\frac{r}{n}}^{\frac{r+1}{n}} f(x) \log \left(n \int_{\frac{r}{n}}^{\frac{r+1}{n}} f(u) du \right) dx \\
\nonumber &\approx&   -\sum_{r \in A_{n}} \sum_{s'_{n,r}\leq s\leq s''_{n,r}} \int_{\frac{s}{n}}^{\frac{s+1}{n}} g(y) \log g(y) dy\\
\nonumber &-& \sum_{s \in B_{n}} \sum_{r'_{n,s}\leq r\leq r''_{n,s}} \int_{\frac{r}{n}}^{\frac{r+1}{n}} f(x) \log f(x) dx\\
\nonumber &=&  -\sum_{r \in A_{n}} \int_{\frac{s'_{n,r}}{n}}^{\frac{s''_{n,r}+1}{n}} g(y) \log g(y) dy -\sum_{s \in B_{n}} \int_{\frac{r'_{n,s}}{n}}^{\frac{r''_{n,s}+1}{n}} f(x) \log f(x) dx\\
  &=& -\int_{A'_{n}} g(y) \log g(y) dy - \int_{B'_{n}} f(x) \log f(x) dx
\label{20}
\end{eqnarray}

\noindent where $A'_{n} = \bigcup_{r \in A_n} \left(\frac{s'_{n,r}}{n}, \frac{s''_{n,r}+1}{n}\right)$ and $B'_{n}=\bigcup_{r \in B_n} \left(\frac{r'_{n,s}}{n}, \frac{r''_{n,s}+1}{n}\right)$.

If we take a subsequence of $\{A'_{n}\}$ converging to $A$ and a subsequence of $\{B'_{n}\}$  converging to $B$ , since $H(X)$ and $H(Y)$ are finite the last term of (\ref{21}) converge to

\begin{equation*}
    -\int_{A} g(y) \log g(y) dy - \int_{B} f(x) \log f(x) dx
\end{equation*}

\noindent and this proves the theorem if (\ref{19}) holds. Remark that (\ref{19}) imply that $P(A)+P(B)=1$.

Suppose now that one of the equalities (\ref{19}) do not hold, for instance \\
 $\sum_{s'_{n,1}\leq s\leq s''_{n,1}} p_{\cdot,s} < p_{r,\cdot}$.

This mean that $p^{*}_{n,s'_{n,r}-1}$  or $p^{*}_{n,s''_{n,r}+1}$   or both are greater than zero. But

\begin{equation*}
    \Delta_{n,r,s} \bigcap \Gamma = \left[ \left(\frac{r}{n} \leq x < \frac{r+1}{n} \right), \left(t \left(\frac{r}{n} \leq y < \frac{s'_{n,r}}{n} \right)\right) \right]
\end{equation*}

\noindent and

\begin{equation*}
    P(\Delta_{n,r,s} \bigcap \Gamma) = P \left(t \left(\frac{r}{n} \right) \leq y < \frac{s'_{n,r}}{n} \right)
\end{equation*}

This holds also for the other cases in the strings, both in $A_n$  and in $B_n$. Then all the strings are disjoint, and we are in the same situation as when (\ref{19}) hold. And this concludes the proof.

\end{proof}

\section{Examples}

The complicate way to arrive to the minimum allows simple result only for special cases.

\vspace{0.5cm}
\noindent \textbf{Example 1.} \emph{If  X and Y have the same discrete distribution, then $\min H(X,Y) = H(X) = H(Y)$.} \\
\noindent Take the maximum, or one of the maximums, among the probabilities of each margin, say $p_{1,\cdot}$,$p_{\cdot,1}$. Then $p_{1,1}^{*}= p_{1,\cdot}$ or $p_{\cdot,1}$. This cancels one row and one column, while the remaining marginal probabilities remain unchanged, so that we may continue in the same way. As a result, the minimizing distribution has $p_{r,r}^{*}=p_{r,\cdot}$ for any $r$, $p_{r,s}^{*}=0$  for $r \neq s$  and

\begin{equation*}
    H(X^{*},Y^{*})=-\sum_{r} p_{r,r}^{*} \log p_{r,r}^{*} = - \sum_{r}p_{r,\cdot} \log p_{r,\cdot} =H(X^{*})=H(X)
\end{equation*}

This expression shows two distributions, the first one two-dimensional, the other one unidimensional but the two entropies have the same value.

\vspace{0.5cm}
\noindent \textbf{Example 2.} \emph{If $X$ and $Y$ have uniform discrete distributions, of length respectively  $m$  and  $kn$ , with $k$ integer, then $\min H(X,Y) = H(X)$. } \\
\noindent We proceed by induction: if $k = 1$, it is the case of Example 1, which now gives $H(X) = \log n$. If $k > 1$ , we have $p_{r,\cdot} = \frac{1}{m} = \frac{1}{kn}$ and $p_{\cdot,s} = \frac{1}{n} > \frac{1}{m}$, therefore we can make $p_{1,1}^{*} = \frac{1}{kn}$. Now the marginal probabilities are unchanged, except  $p_{\cdot,1}$ , which is equal to $\frac{1}{n} - \frac{1}{kn}$; we can make $p_{2,2}=\frac{1}{kn}$, and continue until $p_{n,n}^{*} = \frac{1}{kn}$. The result is a square table of dimension $n \times n$, plus a table of dimensions $\frac{k-1}{n} \times n$. This gives the result. \\
 If  $m = kn + r$ , with $r$ different from zero, we may proceed as above for $kn$ steps, and it remains a $r \times n$  table, with which we may proceed again in the same way.

\vspace{0.5cm}
\noindent \textbf{Example 3.} We have found some cases in which  $H(X,Y) = H(X)$. Let us investigate when this happens. \emph{Suppose that $X$ and $Y$ have discrete distributions, of length respectively  $m$  and  $n$. Then  $H(X,Y) = H(X)$ if there is a partition $\{I_1, I_2,\ldots.\}$ such that}

\begin{equation}
    \sum_{r \in I_s} p_{r,\cdot}= p_{\cdot,s} \qquad s=1,2,\ldots
\label{21}
\end{equation}

\noindent If (\ref{21}) is satisfied, we may put

\begin{equation*}
    p_{r,s}^{*}= p_{r,\cdot} \quad \text{for} \quad r \in I_s
\end{equation*}

\noindent so that the distribution $\{p_{r,s}^{*}\}$ is consistent with the marginals, and $H(X^{*},Y^{*}) = H(X)$ gives the minimum by (\ref{5}). On the other hand, if  $H(X,Y) = H(X)$, supposing, as already made, that the probabilities $p_{\cdot,s}$ and $p_{r,\cdot}$ are decreasing, it cannot be $p_{1,\cdot} > p_{\cdot,1}$ , since all the entries $p_{r,s}^{*}$  would be lesser than $p_{1,\cdot}$  and $p_{1,\cdot}$  could not appear among the entries $p_{r,s}^{*}$; continuing so we prove the assertion.
\noindent We remark that, in agreement with (\ref{5}), $H(X) \geq  H(Y)$. In fact

\begin{eqnarray*}
            H(X)-H(Y) &=& \sum_{s} p_{\cdot,s} \log p_{\cdot,s} - \sum_{r} p_{r,\cdot} \log p_{r,\cdot}\\
\nonumber   &=& \sum_{s} \left[ p_{\cdot,s} \log p_{\cdot,s} - \sum_{r \in I_s} p_{r, \cdot} p_{r, \cdot} \right]\\
\nonumber   &=& \sum_{s} \log \frac{p_{\cdot,s}^{p_{\cdot,s}}}{\prod p_{r,\cdot}^{p_{r,\cdot}}}= \sum_{s} \prod_{r} \log \left(\frac{p_{\cdot,s}}{p_{r,\cdot}}\right)^{p_{r,\cdot}} \geq 0
\end{eqnarray*}

\noindent with equality holding iff  $X$  and  $Y$  have the same distribution.

A particular case is when $X$ assumes two values and $Y$ has geometric distribution, more exactly

\begin{equation*}
    p_{1, \cdot}=p; \quad p_{2,\cdot}=q=1-p
\end{equation*}

\begin{equation*}
    p_{\cdot,s}=q^{s-1}p; \quad s=1,2,\ldots \\
\end{equation*}

Now  $p_{\cdot,1}=p_{1, \cdot}$  and $p_{\cdot,2}=\sum_{r>1}p_{r,\cdot}$  so that by Example $3$ \emph{min}$H(X,Y) = H(X) \\
= - \log p - \frac{q}{p} \log p$. Also the direct calculation is very easy.

\vspace{0.5cm}
\noindent \textbf{Example 4.} Consider now a case in which $X$ and $Y$ have respectively geometric and uniform distributions; more precisely

\begin{eqnarray*}
    p_{r,\cdot}&=&\frac{k-1}{k} \left(\frac{1}{k}\right)^{r-1}, \quad r=1,2,\ldots\\
    p_{\cdot,s}&=&\frac{1}{k}, \quad s=1,2,\ldots,k
\end{eqnarray*}

Then

\begin{equation*}
    p_{1,1}^{*}= \min \left\{\frac{k-1}{k}, \frac{1}{k}\right\}=\frac{1}{k}
\end{equation*}

\noindent and, recursively,

\begin{equation*}
    p_{1,s}^{*}= \min \left\{\frac{k-1}{k}-\frac{s-1}{k}, \frac{1}{k}\right\}=\frac{1}{k} \quad \text{for} \quad s=2,3, \ldots,k-1
\end{equation*}

\noindent The result is that the entries of $P^{*}$ are null, except

\begin{eqnarray*}
    p_{1,s}^{*}&=&p_{\cdot,s}  \quad \text{for} \quad s=1,2,3, \ldots,k-1 \\
    p_{r,\cdot}&=&p_{r,\cdot}  \quad \text{for} \quad r=2,3, \ldots
\end{eqnarray*}

Then

\begin{eqnarray*}
    H(X^{*},Y^{*}) &=& -\sum_{1 \leq s \leq k-1} p_{\cdot,s} \log p_{\cdot,s} - \sum_{r \geq 2} p_{r,\cdot} \log p_{r,\cdot} \\
    &=& -\sum_{1 \leq s \leq k} p_{\cdot,s} \log p_{\cdot,s} + p_{\cdot,k} \log p_{\cdot,k} - \sum_{r\geq2} p_{r,\cdot} \log p_{r,\cdot} + p_{1,\cdot} \log p_{1,\cdot} \\
    &=& H(X) + H(Y) + \frac{1}{k} \log \frac{1}{k} + \frac{k-1}{k} \log \frac{k-1}{k} =\max H(X,Y)-K_k
\end{eqnarray*}

\noindent with  $K_k>0$. Since $K_k$ tends to zero when $k \rightarrow \infty$, the minimum of $H(X,Y)$ can be near to the maximum how much as we want. But the equality between the maximum and the minimum cannot be reached, because when $k \rightarrow \infty$ one of the marginal distributions disappears.


\begin{thebibliography}{9}

\bibitem{r1}
\textsc{Dall'Aglio, G.} (1991).
Fr\'{e}chet Classes: the beginnings
\textsc{in Dall'Aglio, G., Kotz, S., Salinetti, G.}::
\textit{Advances in probability distributions with given marginals}, Kluwer Academic Publishers, 1--12.


\bibitem{r2}
\textsc{Fr\'{e}chet, M.}  (1951).
Sur les tableaux de correlations dont les marges sont donn\'{e}es.
\textit{Annales de l'Univerit\'{e} de Lyon} \textbf{4} 53--84.

\bibitem{r3}
\textsc{Hoeffding, W.}  (1940).
Massstabe invariante Korrelationtheorie.
\textit{Schriften Math. Inst. Univ. Berlin} \textbf{5} 181--233.

\bibitem{r4}
\textsc{Salvemini, T.} (1939).
Sugli indici di omofilia.
\textit{Supplemento statistico Nuovi Problemi} \textbf{5} 105--115.


\bibitem{r5}
\textsc{Shannon, C. E.} (1948).
A mathematical theory of communication.
\textit{Bell Syst. Techn. J.}
\textbf{27} 379--423, 623--656.

\bibitem{r6}
\textsc{Shunsuke, I.} (1993).
Information Theory for continuous Systems.
\textit{Word Scientific}.

\end{thebibliography}
\end{document}